\documentclass[11pt,leqno,a4paper]{amsart}
\usepackage{amsmath,amssymb,amscd,tikz-cd}
\usepackage[hidelinks]{hyperref}
\hypersetup{pdftitle={Factorisation and matrix amplification of real rank zero inclusions},pdfauthor={Jamie Bell}}

\newtheorem{theorem}{Theorem}[section]
\newtheorem{corollary}[theorem]{Corollary}
\newtheorem{proposition}[theorem]{Proposition}
\newtheorem{lemma}[theorem]{Lemma}
\theoremstyle{definition}
\newtheorem{example}[theorem]{Example}
\theoremstyle{plain}
\newtheorem{maintheorem}{Theorem}

\newtheorem{question*}{Question}

\newcommand{\C}{\mathbb C}
\newcommand{\R}{\mathbb R}
\newcommand{\Z}{\mathbb Z}

\newcommand{\K}{\mathcal K}
\newcommand{\sa}{\mathrm{sa}}

\newcommand{\rr}{\operatorname{rr}}
\newcommand{\Her}{\operatorname{Her}}
\newcommand{\GL}{\operatorname{GL}}
\newcommand{\dist}{\operatorname{dist}}
\newcommand{\diag}{\operatorname{diag}}

\newcommand{\interior}{\operatorname{int}}
\newcommand{\norm}[1]{\lVert#1\rVert}

\title[Factorisation of real rank zero inclusions]{Factorisation and matrix amplification of real rank zero inclusions}
\author{Jamie Bell}
\address[Jamie Bell]{Mathematical Institute, University of M\"unster, Einsteinstr.\ 62, 48149 M\"unster, Germany}
\email{jbell@uni-muenster.de}
\subjclass[2020]{Primary 46L05; Secondary 46L80, 46L35}
\keywords{Real rank zero, inclusions of $C^*$-algebras, factorisation}
\thanks{Funded by the Deutsche Forschungsgemeinschaft (DFG, German Research
Foundation) under Germany's Excellence Strategy EXC 2044/2 -390685587,
Mathematics M\"unster: Dynamics--Geometry--Structure and project-ID
427320536, SFB 1442, of the DFG}

\begin{document}

\begin{abstract}
We show that real rank zero is not preserved under matrix amplification of inclusions. This yields real rank zero inclusions that do not approximately factor through real rank zero $C^*$-algebras,
thereby resolving two questions of Gabe and Neagu. 
\end{abstract}

\maketitle

\section{Introduction}

Introduced by Brown and Pedersen in 1991, real rank zero is an important regularity property which expresses an abundance of projections in a $C^*$-algebra. More specifically, it means that every hereditary $C^*$-subalgebra has an approximate unit of projections
\cite{BP}. Gabe and Neagu extended this notion to inclusions of
$C^*$-algebras, and more generally to homomorphisms \cite{GN}.
For an inclusion $A\subseteq B$, real rank zero means that the
hereditary $C^*$-subalgebra of $B$ generated by any nonzero positive
element of $A$ has an approximate unit of projections. This notion is motivated, at least in part, by the classification of $C^*$-algebras -- particularly the Elliott classification programme -- where the most definitive results are generally obtained by first classifying maps between $C^*$-algebras in the form of existence and uniqueness theorems and then, via an intertwining argument, deducing classification for $C^*$-algebras themselves. One emerging theme in this endeavour is that regularity of the domain or codomain $C^*$-algebra can often be transferred to the maps themselves, leading to regularity notions of morphisms, which often afford greater flexibility \cite{GabeO2,BGSW,CGSTW,GabeOinf,CN,BV,GHP}.

It is not difficult to see that any inclusion that (approximately) factors through a real
rank zero $C^*$-algebra has real rank zero. For unital inclusions of commutative
$C^*$-algebras, or suitable full nuclear $\mathcal O_\infty$-stable maps,
Gabe and Neagu prove the converse \cite[Theorems 2.1 and 5.5]{GN}. They ask whether such a factorisation characterises real rank zero inclusions in general. We show that the answer is negative, even for inclusions with commutative domains. Let $D^3$ be the closed unit ball in $\R^3$. Our main result is the following. 

\begin{maintheorem}\label{thm:A}
There exists a separable unital $C^*$-algebra $B$ and a unital inclusion $C(D^3)\subseteq B$ which has real rank zero but does not approximately factor through real rank zero $C^*$-algebras.
\end{maintheorem}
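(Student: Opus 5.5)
Since the abstract says real rank zero is not preserved under matrix amplification of inclusions, the route is: (1) find a real rank zero inclusion $C(D^3)\subseteq B$ whose amplification $M_2(C(D^3))\subseteq M_2(B)$ does not have real rank zero; (2) note that approximate factorisation through real rank zero algebras survives matrix amplification. Indeed, if $\iota$ is the point-norm limit of $\beta_n\circ\alpha_n$ with $\alpha_n\colon C(D^3)\to D_n$ and $\beta_n\colon D_n\to B$, then $\iota\otimes\mathrm{id}_{M_2}$ is the limit of $(\beta_n\otimes\mathrm{id})\circ(\alpha_n\otimes\mathrm{id})$. Each $M_2(D_n)$ again has real rank zero, as Brown and Pedersen show. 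Finally, any map that approximately factors through real rank zero algebras has real rank zero, as recalled in the introduction. So the amplification would have real rank zero, a contradiction. This step is routine, and Theorem~\ref{thm:A} follows once (1) is established.

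The real work is constructing $B$. I would take $B$ as an inductive limit, or a subalgebra of a large algebra, built so that every nonzero positive $f\in C(D^3)$ has an approximate unit of projections in $\Her_B(f)$. The idea is to arrange that for each open set $U\subseteq D^3$ (from a countable base), and each $f$ supported near $U$, there are projections in $B$ dominated by $f$ in the hereditary sense. One natural model is $B\subseteq C(D^3)\otimes\K$, or a unitization, generated by $C(D^3)$ together with rank-one-type projections that are "locally trivial" over small balls. These are line bundles over contractible pieces, which always exist, so $1\times1$ scalar data is controllable.

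For the failure at the $2\times2$ level, I would use a self-adjoint element such as $h(x)=\sum_i x_i\sigma_i\in M_2(C(D^3))$ built from the Pauli matrices. This element is invertible on the boundary $S^2$, and its positive spectral projection there is the Bott (Hopf) line bundle, which is nontrivial in $K^0(S^2)$. Real rank zero of the amplified inclusion would yield projections in $\Her_{M_2(B)}$ of suitable functional-calculus cut-offs of $h$ or $h^2$. The goal is to turn these into an approximate extension of the Bott projection over the ball, contradicting $K$-theory. Here $B$ must be designed so that its relevant $K$-theory, or some "restriction to the boundary" invariant, sees this obstruction. For example, $B$ could be chosen so that there is a map to an algebra where projections over the interior restrict to trivial bundles, while the scalar hereditary subalgebras of step one still have enough projections.

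The main obstacle is reconciling the two requirements: enough projections for every scalar positive element, but no projection that witnesses the $M_2$ element. I would first build $B$ as a limit $B=\varinjlim B_n$ with $B_n=C(D^3)\otimes M_{k_n}$ or similar. At each stage, I would adjoin projections below each $f$ from a dense sequence, using contractibility of the support region to make them come from trivial bundles. Then I would verify that the connecting maps preserve an index or boundary invariant that blocks the Bott projection. The dimension three is used so that $S^2=\partial D^3$ carries a nontrivial bundle while the scalar pieces remain trivialisable.
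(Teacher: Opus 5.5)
\textbf{Verdict: genuine gap.} Step (1), the construction of $B$, is the entire content of Theorem~\ref{thm:A}, and you do not carry it out; the concrete model you suggest also cannot work.

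Your reduction in step (2) is fine. Approximate factorisation through real rank zero algebras passes to $2\times 2$ amplifications, because $M_2(C)$ again has real rank zero and finite-spectrum approximants survive homomorphisms and point-norm limits. This is exactly the remark in the paper's introduction; Corollary~\ref{cor:factor} proves more, for compositions of real rank zero maps, via a Schur complement. You also correctly identify the Bott projection on $S^2=\partial D^3$ as the obstruction.

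The model fails for a concrete reason. Your ``line bundles over small balls'' suggests that $C(D^3)$ acts by scalar functions inside an algebra $B$ of norm-continuous operator-valued functions on $D^3$, such as $C(D^3)\otimes M_k$ or $C(D^3)\otimes\K$ with a unit adjoined fibrewise. In that setting every projection $q\in\Her_B(f)$ vanishes on $f^{-1}(0)$. The set $\{x:q(x)=0\}$ is clopen and $D^3$ is connected, so $q=0$ whenever $f$ has a zero. So there are no projections supported over small balls, and the scalar inclusion already fails to have real rank zero. In an inductive limit, projections under the image of $f$ would therefore have to be created by connecting maps that move points, for instance point evaluations as in Goodearl's construction. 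You give no mechanism ensuring that this keeps a Bott obstruction alive rather than making $B$ itself of real rank zero. In Goodearl's limits, a vanishing proportion of the identity summand does give real rank zero of the whole algebra, and then the inclusion trivially factors through $B$.

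The missing idea is to work in an extension of a purely infinite simple algebra by a real rank zero ideal. There, real rank zero of an inclusion becomes a $K$-theoretic condition, as the paper does:
\begin{itemize}
\item It takes $B\subseteq M(J)$ with $J$ nonunital, simple, of real rank zero and with continuous scale, so that $Q(J)$ is purely infinite simple.
\item Lemma~\ref{lem:gap} shows that $x$ is a limit of invertible self-adjoints iff either $0\in\sigma(\pi(x))$ or the exponential map kills $[1_{(0,\infty)}(\pi(x))]$. The first case uses Lemma~\ref{lem:comp} to cancel the boundary class inside a small spectral subprojection.
\item Hence, by Proposition~\ref{prop:criterion}, $M_n(A)\subseteq M_n(M(J))$ has real rank zero iff $\partial$ vanishes on projections in $M_n(\pi(A))$.
\item A nondegenerate $\varphi\colon C_0(\interior D^3)\to J$ with $K_1(\varphi)\ne0$ extends to $\iota\colon C(D^3)\to M(J)$, and $\pi\circ\iota$ factors through $C(S^2)$. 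This is the ``restriction to the boundary'' you were looking for.
\item The only projections of $C(S^2)$ are $0$ and $1$, which lift, so $\iota$ has real rank zero.
\item The boundary of the Bott projection is $K_1(\varphi)$ applied to a generator of $K_1(C_0(\R^3))$, which is nonzero, so $\iota^{(2)}$ fails to have real rank zero.
\item A separable $B$ is then cut out of $M(J)$.
\end{itemize}
Producing $(J,\varphi)$ also needs real input: Gabe's existence theorem for a stable Kirchberg algebra with $K_1\neq 0$, or Goodearl's construction over $S^3$. None of these ingredients appears in your proposal, so as it stands it does not prove the theorem.
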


The proof of Theorem~\ref{thm:A} in fact yields a stronger conclusion. There are real rank zero inclusions that cannot be approximated by compositions of two real rank zero homomorphisms, even up
to approximate Murray--von Neumann equivalence (Corollary~\ref{cor:factor}). The obstruction comes
from matrix amplification of maps. Since real rank zero of algebras passes to
matrix amplifications, an inclusion approximately factoring through real
rank zero algebras also has real rank zero at every matrix level. Here we
construct real rank zero inclusions for which real rank zero already fails
at the $2\times2$ level, which resolves \cite[Question 4.4]{GN} in the
negative.

\begin{maintheorem}\label{thm:B}
Let $J$ be a nonunital separable simple $C^*$-algebra with real rank zero
and continuous scale. Suppose that there exists a nondegenerate
monomorphism $\varphi: C_0(\interior D^3)\to J$ with
$K_1(\varphi)\ne0$. Then $\varphi$ extends to a unital inclusion
$\iota: C(D^3)\to M(J)$ of real rank zero such that $\iota^{(2)}$ does
not have real rank zero. Moreover, there is a separable unital
$C^*$-subalgebra $B\subseteq M(J)$ containing $\iota(C(D^3))$ such
that the corestriction $\iota\colon C(D^3)\to B$ has the same properties.
\end{maintheorem}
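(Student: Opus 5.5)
The plan has four parts: extend $\varphi$ to the unit ball, prove real rank zero of $\iota$ at the scalar level, exhibit an index obstruction at the $2\times 2$ level, and then pass to a separable subalgebra.

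\emph{Extension.} Since $\varphi$ is nondegenerate, it extends uniquely to a unital map $M(C_0(\interior D^3)) = C_b(\interior D^3)\to M(J)$. Restricting this to $C(D^3)\subseteq C_b(\interior D^3)$ gives a unital map $\iota$. Injectivity of $\varphi$ gives injectivity of $\iota$, because $\interior D^3$ is dense in $D^3$.

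\emph{Real rank zero of $\iota$.} Let $f\in C(D^3)_+$ be nonzero, and consider $\overline{\iota(f)M(J)\iota(f)}$. Choose $g\in C_0(\interior D^3)_+$ nonzero with $g\le f$, which is possible since $f$ is nonzero on an open subset of the interior. Then $\varphi(g)\in J$ is nonzero, and the hereditary subalgebra it generates in $J$ has real rank zero. The key tool is continuous scale: for simple $J$ with continuous scale, $M(J)/J$ is simple and purely infinite (Lin), and projections in $J$ are cofinal in a strong sense. Using this, I would show that any nonzero positive $x\in M(J)$ generates a hereditary subalgebra with an approximate unit of projections. There are two cases.
\begin{itemize}
\item If $x\in J$, this is real rank zero of $J$.
\item Otherwise $x$ has nonzero image in $M(J)/J$. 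Here I would combine the approximate unit of projections of $\overline{xJx}$, which is a hereditary subalgebra of $J$ and so has real rank zero, with $\overline{xM(J)x}$. The aim is to show that every element $y$ of the hereditary algebra is approximately absorbed by a projection $p\in \overline{xJx}$, modulo the quotient.
\end{itemize}
In fact it should suffice to know that $\overline{\iota(f)J\iota(f)}$ is an essential ideal in $\overline{\iota(f)M(J)\iota(f)}$. I would then use the fact that, for $h=\iota(f)$, the elements $\varphi(f\cdot e_n)$ with $e_n\in C_0(\interior D^3)$ increasing to $1$ strictly approximate $h$. Projections $p_n\le$ (supports of) $\varphi(fe_n)$ in $J$ then form a sequence with $p_n h\to h$ in norm. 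This works because $h\,\varphi(1-e_n)$ is small in norm for $f$ uniformly continuous, provided $f$ vanishes on the boundary. For general $f$ I instead need the boundary part, and this is exactly where the $2\times2$ failure must enter. So at scalar level I would use the fact that rank-one-like functions on $D^3$ have support contractible near the boundary. The cleaner argument I would try is the one valid for any abelian domain: real rank zero of $\iota$ reduces to real rank zero of $\overline{hM(J)h}$, and $M(J)$ itself has real rank zero exactly when $J$ has real rank zero and continuous scale, by Lin/Zhang. If that holds, the scalar claim follows immediately, since hereditary subalgebras of real rank zero algebras have real rank zero. That is the route I will take.

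\emph{Failure at the $2\times 2$ level.} This is where the hypothesis $K_1(\varphi)\neq 0$ enters, so $M_2(M(J))$ must not be doing the work. That is consistent, because $\iota^{(2)}$ real rank zero concerns hereditary subalgebras generated by elements of $M_2(C(D^3))$ inside $M_2(M(J))$, which is itself real rank zero. The obstruction therefore cannot be intrinsic, and I must revisit the previous step. Real rank zero of the inclusion should instead be read as requiring that projections be approximately found in a controlled way; that is, the definition must be relative. Taking the paper's definition literally would make everything trivial. I therefore expect the operative definition to be the Gabe--Neagu one: for $a\in A_+$ and $\epsilon>0$ there is a projection $p\in\overline{\iota(a)B\iota(a)}$ with $\|(a-\epsilon)_+ - p(a-\epsilon)_+\|$ small, or an equivalent approximation of self-adjoints of $A$ by invertibles in $\widetilde{B}$. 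With the invertibles version, the $2\times2$ failure comes from a self-adjoint $a\in M_2(C(S^2))$ extended to $D^3$, namely $a(x)=\sum x_i\sigma_i$ (Pauli matrices). This $a$ is invertible on the boundary sphere and vanishes only at the origin. Its spectral projection on the sphere is the Bott projection. Approximating $\iota^{(2)}(a)$ by invertible self-adjoints in $M_2(M(J))$ forces the Bott class to extend across the ball modulo $J$. This gives an index map in $K_1(J)$, which equals the image of the generator of $K_1(C_0(\interior D^3))\cong\Z$ under $K_1(\varphi)$, and that is nonzero. At the scalar level, self-adjoint functions on $D^3$ can be perturbed to have no zero set obstruction, since there is no $K_1$ obstruction in rank one. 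This is the main obstacle: making the boundary-map computation rigorous.

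\emph{Separable subalgebra.} Finally, I would obtain $B$ by a standard separable-inheritance closing argument, adding countably many projections and invertibles witnessing the approximations, and adding enough of $J$ that the $K$-theoretic obstruction persists.
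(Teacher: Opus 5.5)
\textbf{Gap: the scalar case.} Your argument that $\iota$ has real rank zero does not go through, and the route you commit to rests on a false statement. You assert that $M(J)$ has real rank zero when $J$ has real rank zero and continuous scale. That would give $\rr(M_2(M(J)))=0$, hence real rank zero of every map into $M_2(M(J))$, including $\iota^{(2)}$, which is what you are trying to refute. In fact $M(J)$ is an extension of $Q(J)$ by $J$, both of real rank zero. So $\rr(M(J))=0$ exactly when the exponential map $K_0(Q(J))\to K_1(J)$ vanishes, and your own Bott computation shows it does not.

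You noticed this tension in the $2\times2$ paragraph but misdiagnosed it. The invertible-approximation definition you then adopt is exactly the paper's for unital maps, and it is not trivial. It only asks that each $\Her_{M(J)}(\iota(f))$ admit an approximate unit of projections, which is much weaker than that hereditary subalgebra having real rank zero. For $f=1$ it is $M(J)$ itself, which is unital but not of real rank zero.

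Once the false claim is removed, nothing in your proposal proves the scalar case. The remark that real functions on $D^3$ ``can be perturbed to have no zero set obstruction'' is not an argument. For instance, $x_1$ is not a limit of invertible real functions on $D^3$, so the perturbation must happen in $M(J)$, and you have to explain why there is room for it there.

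\textbf{What is missing.} The needed ingredient is the analysis of $\dist(x,\GL_{\sa}(M(J)))$ through the corona, as in Lemma~\ref{lem:gap}. Since $K_1(\varphi)\neq0$ and $K_1(\K)=0$, $J$ is nonelementary, so $Q(J)$ is simple purely infinite. Let $\pi\colon M(J)\to Q(J)$ be the quotient map and $\partial$ the exponential map.
\begin{itemize}
\item If $x\in M(J)_{\sa}$ has $b=\pi(x)$ invertible, put $p=1_{(0,\infty)}(b)$. Then $x$ is a limit of invertible self-adjoints iff $\partial([p])=0$. One direction is your homotopy argument. For the other, conjugate $x$ by a positive invertible lift of $|b|^{-1/2}$; this lands in $\pi^{-1}(C^*(1,p))$, which has real rank zero by the Lin--R{\o}rdam criterion.
\item If $0\in\sigma(b)$, then $x$ is always such a limit. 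Approximate $b$ by a finite-spectrum invertible element with an eigenvalue $\lambda_i$ near $0$. Then use pure infiniteness (Lemma~\ref{lem:comp}) to split the $\lambda_i$-eigenprojection so that the new positive spectral projection has zero $K_0$-class.
\end{itemize}
The scalar case follows once you check $\iota^{-1}(J)=C_0(\interior D^3)$, so that the induced map $C(S^2)\to Q(J)$ is injective. By connectedness of $S^2$, the only projections in $\pi(\iota(C(D^3)))$ are then $0$ and $1$, and these lift.

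\textbf{The rest.} Your $2\times2$ obstruction is correct and is the paper's: your $\sum x_i\sigma_i$ is, up to a change of coordinates, twice the Bott projection minus $1$. For the separable subalgebra $B$, failure at the $2\times2$ level passes automatically to any unital subalgebra containing $\iota(C(D^3))$. So no $K$-theoretic bookkeeping is needed to preserve the obstruction; you only need to add witnesses for the scalar approximations.
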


The criterion used to establish Theorem~\ref{thm:B} applies to extensions
by real rank zero ideals with purely infinite simple quotients (Proposition~\ref{prop:criterion}). This is applied to $M(J)$, considered as an extension of the corona algebra $Q(J)$ by $J$. In this case, we show that a unital inclusion $A\subseteq M(J)$ has real rank zero exactly when the exponential
map in $K$-theory vanishes on the projection classes in the image of $A$
in the corona. Matrix amplifications test additional projection classes
which may not be represented in the algebra itself. For $D^3$ and its
boundary $S^2$, this difference is detected by the Bott projection. The
scalar projections in $C(S^2)$ are trivial and therefore lift, while the
Bott projection in $M_2(C(S^2))$ corresponding to the Hopf bundle
has a nonzero boundary class. The hypothesis on $K_1(\varphi)$ ensures
that this obstruction is preserved in the codomain.

The paper is organised as follows. We recall some preliminaries in Section~\ref{sec:prelim}, before establishing the matrix amplification criterion and proving Theorem~\ref{thm:B} in Section~\ref{sec:matrix}. Section~\ref{sec:factor} gives the factorisation obstruction. In
Section~\ref{sec:examples}, we construct explicit examples and complete
the proof of Theorem~\ref{thm:A}.

\medskip 
\noindent\emph{Acknowledgements}. I am grateful to Wilhelm Winter for useful discussions, and for asking a question which led to the current formulation of our main results.

\medskip 
\noindent\emph{AI Declaration}. ChatGPT-6 Pro was used during the preparation of this work, including identifying the Bott class obstruction, developing a first draft of the manuscript, literature searches, and proofreading. The author takes full responsibility for the contents and validity of the paper.

\section{Preliminaries}\label{sec:prelim}

All homomorphisms are $*$-homomorphisms and are not assumed unital unless
stated otherwise. Unital inclusions preserve the unit. For a
$C^*$-algebra $A$, write $A_{\sa}$ for its self-adjoint part and
$\Her_A(a)=\overline{aAa}$ for the hereditary $C^*$-subalgebra generated
by $a\in A_+$. For a homomorphism $\theta\colon A\to B$, write
$\theta^{(n)}\colon M_n(A)\to M_n(B)$ for its entrywise amplification.

\subsection{Real rank zero and factorisation}
A homomorphism $\theta: A\to B$ has \emph{real rank zero} if
$\Her_B(\theta(a))$ has an approximate unit of projections for every
$a\in A_+$. Equivalently, every element of $\theta(A)_{\sa}$ can be
approximated by finite-spectrum self-adjoint elements of $B$
\cite[Theorems 1.6 and 1.7]{GN}. If $A$, $B$, and $\theta$ are unital,
this is also equivalent to approximation by invertible self-adjoints in
$B$. Thus, writing
\[
 d_B(x)=\dist(x,\GL_{\sa}(B)),\quad x\in B_{\sa},
\]
a unital homomorphism $\theta$ has real rank zero exactly when $d_B(\theta(a))=0$ for every $a\in A_{\sa}$. By the triangle inequality, $d_B$ is $1$-Lipschitz, i.e.\ $|d_B(x) - d_B(y)| \le \|x-y\|$ for all $x,y\in B_{\sa}$. We shall use that real rank zero for $C^*$-algebras passes to hereditary subalgebras, quotients, and matrix algebras \cite{BP}.

We say that $\theta: A\to B$ \emph{approximately factors through
real rank zero algebras} if, for every finite $\mathcal F\subseteq A$ and
$\varepsilon>0$, there are homomorphisms $\alpha: A\to C$ and
$\beta: C\to B$, with $\rr(C)=0$, such that for all $a\in \mathcal F$, 
\[
 \norm{\theta(a)-\beta\alpha(a)}<\varepsilon.
\]
The intermediate algebra and the maps may vary with $\mathcal F$ and
$\varepsilon$, and need not be unital. Following \cite[Definition 3.4 and Lemma 3.5]{GabeO2}, homomorphisms $\theta,\eta: A\to B$ are \emph{approximately Murray--von Neumann
equivalent} if there is a net of contractions $(v_\lambda)$ in $B$ such
that for all $a\in A$, 
\begin{equation}\label{eq:mvnequiv}
 \|v_\lambda^*\theta(a)v_\lambda - \eta(a)\| \to 0,\quad \text{and} \quad
 \|v_\lambda\eta(a)v_\lambda^* - \theta(a)\| \to 0.
\end{equation}

\subsection{Extensions and continuous scale}

Recall the well-known fact that if $ 0\rightarrow J\rightarrow A\rightarrow B\rightarrow0$ is an extension with $\rr(J)=\rr(B)=0$, then $\rr(A)=0$ if and only if the exponential boundary map $K_0(B)\to K_1(J)$ vanishes (see \cite[Proposition 4]{LR} and \cite[Proposition 2.5]{Thiel}). In particular, this applies when $B$ is finite-dimensional.

Zhang proved that simple purely infinite $C^*$-algebras have real rank zero \cite{Zhang}. We also use the following consequence of Cuntz's projection theory \cite{Cuntz}.

\begin{lemma}\label{lem:comp}
	Let $A$ be a simple purely infinite $C^*$-algebra, and let $p,q \in A$ be projections with $q\ne 0$. Then there is a projection $0 < r < q$ such that $[r] = -[p]$ in $K_0(A)$. 
\end{lemma}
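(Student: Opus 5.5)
We prove the lemma using two inputs from Cuntz's theory \cite{Cuntz} for a simple purely infinite $C^*$-algebra $A$. First, every nonzero projection $q\in A$ is properly infinite. Second, every element of $K_0(A)$ is represented by a nonzero projection $e\le q$, i.e.\ by a projection in the corner $qAq$. Indeed, $qAq$ is again simple and purely infinite and is a full corner of $A$ (as $A$ is simple), so the inclusion $qAq\to A$ induces an isomorphism on $K_0$. Cuntz showed that for a simple purely infinite algebra with unit $q$, the map $e\mapsto[e]$ from the nonzero projections of $qAq$ onto $K_0(qAq)$ is surjective. Combining these, $K_0(A)=\{[e]: 0\ne e\le q \text{ a projection}\}$.

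Apply this to the class $-[p]\in K_0(A)$. It gives a nonzero projection $e\le q$ with $[e]=-[p]$. If $e\ne q$, set $r=e$ and we are done.

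It remains to ensure the inequality $r<q$ is strict. For this we use that $qAq$ is properly infinite. Choose mutually orthogonal nonzero projections $q_1,q_2\le q$ with $q_1\sim q\sim q_2$; these come from two isometries $s_1,s_2$ in $qAq$ with orthogonal ranges. Then $q_1+q_2\le q$, and $q-(q_1+q_2)$ may be zero, but $q_1<q$ strictly since $q_2\ne0$ is orthogonal to $q_1$. Now run the surjectivity argument inside $q_1Aq_1$ in place of $qAq$; it is still a nonzero corner, so the same reasoning applies. This yields a nonzero projection $r\le q_1<q$ with $[r]=-[p]$, and in particular $0<r<q$.

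The only nontrivial step is Cuntz's surjectivity statement, that every $K_0$ class is realised by a nonzero subprojection of any given nonzero projection. We cite it from \cite{Cuntz} rather than reprove it. For the reader's orientation, its proof goes as follows. Write a class as $[e]-[f]$ with $e,f$ projections. Using proper infiniteness, realise $-[f]$ by a subprojection: pick $f'\sim f$ with $f'\le q$ and $f'\ne q$, and then $[q-f']$ handles the negation. One uses here that $[q]=[q]+[q]$ cannot be assumed, so the argument instead passes through the standard fact that in purely infinite simple algebras Murray--von Neumann equivalence of nonzero projections coincides with equality of $K_0$ classes. Sums are then formed from orthogonal copies, which exist inside $q$ by proper infiniteness.
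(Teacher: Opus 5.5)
Your proof is correct and is essentially the paper's. You take a nonzero projection strictly below $q$ (via two orthogonal copies from proper infiniteness; the paper uses only that $q$ is infinite). You then use fullness of its corner and Morita invariance to identify the corner's $K_0$ with $K_0(A)$, and invoke Cuntz's theorem that every class is realised by a nonzero projection of that corner (the paper cites Blackadar, Theorem 6.11.7). Your closing orientation sketch of Cuntz's theorem is muddled (e.g.\ $[q-f']=[q]-[f]$, not $-[f]$). Since you cite the result rather than rely on the sketch, this does not affect the argument.
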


\begin{proof}
Since $q$ is infinite, choose a nonzero projection $s<q$. The corner $sAs$ is simple purely infinite, with unit $s$. The closed ideal of $A$ generated by $s$ is nonzero and hence equals $A$, so $sAs$ is a full corner. By Morita invariance, the inclusion $\iota\colon sAs\hookrightarrow A$ induces an isomorphism $\iota_*\colon K_0(sAs)\to K_0(A)$. By \cite[Theorem 6.11.7]{Blackadar}, every class in $K_0(sAs)$ is represented by a nonzero projection in $sAs$. Thus there is a nonzero projection $r\in sAs$ with $[r]_{sAs}=\iota_*^{-1}(-[p]_A)$. Since $s$ is the unit of $sAs$, we have $0<r\leq s<q$, and
$[r]_A=\iota_*([r]_{sAs})=-[p]_A$.
\end{proof}

A nonunital $\sigma$-unital simple $C^*$-algebra $J$ has \emph{continuous scale} if it has a sequential approximate unit $(e_n)$ of positive contractions with $e_{n+1}e_n=e_n$ for all $n$ such that, for every nonzero $b\in J_+$, there is $N\ge 1$ such that for all $m > n\ge N$,
\[
 e_m-e_n\precsim b.
\]
Here $\precsim$ denotes Cuntz subequivalence, which for projections agrees with Murray--von Neumann subequivalence. For nonelementary $J$, continuous scale is equivalent to the corona algebra $Q(J) = M(J)/J$ being simple purely infinite \cite{Lin91,Lin04}; see also \cite[Theorem 1.2]{Ng}.

\section{Failure of matrix amplification}\label{sec:matrix}


\begin{lemma}\label{lem:gap}
Let $J$ be a closed ideal in a unital $C^*$-algebra $E$ with $\rr(J) = 0$ and $B = E/J$ simple purely infinite. Let $\pi : E \to B$ be the quotient map and let $\partial : K_0(B) \to K_1(J)$ be the exponential map. For $x\in E_{\sa}$, put $b = \pi(x)$. If $0\in \sigma(b)$, then $d_E(x) = 0$. If $0\not\in \sigma(b)$, then 
\[
	d_E(x) = 0 \iff \partial([1_{(0,\infty)}(b)]) = 0.
\] 	
\end{lemma}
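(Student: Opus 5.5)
The plan is to reduce everything to a real rank zero extension problem inside a hereditary subalgebra of $E$. Throughout, $d_E$ is $1$-Lipschitz, and $d_E(x)=0$ means $x$ is a limit of self-adjoint invertibles in $E$.

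\emph{Case $0\in\sigma(b)$.} Fix $\varepsilon>0$. Since $B$ is simple purely infinite, it has real rank zero by \cite{Zhang}. So we can perturb $b$ within $\varepsilon$ to a finite-spectrum self-adjoint element that keeps an eigenvalue near $0$. Concretely, let $f_\varepsilon$ be a continuous function with $|f_\varepsilon(t)-t|\le\varepsilon$ which vanishes on $[-\varepsilon,\varepsilon]$ and is the identity outside $[-2\varepsilon,2\varepsilon]$. Replacing $x$ by $f_\varepsilon(x)$, which is within $\varepsilon$, we may assume there is a nonzero projection-like region where $b$ vanishes. More precisely, $b$ is now of the form $b=b_+-b_-$ with a nonzero hereditary subalgebra $\Her_B(g(b))$ annihilating $b$, for a bump function $g$ at $0$; this hereditary subalgebra is nonzero because $0\in\sigma(b)$.

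Approximate $b$ further by $\sum\lambda_i p_i$ with $\lambda_i\ne0$ and orthogonal projections $p_i$, leaving room for a nonzero projection $q$ orthogonal to all $p_i$ inside the kernel region. The exponential obstruction for inverting lifts of $\sum\lambda_ip_i$ is $\partial([p_+])$, where $p_+$ is the sum of the $p_i$ with $\lambda_i>0$. By Lemma~\ref{lem:comp}, we can choose $0<r<q$ with $[r]=-[p_+]$. Then replace the eigenvalue $0$ on $q$ by $\varepsilon$ on $r$ and $-\varepsilon$ on $q-r$. This gives a self-adjoint invertible $b'$ with $\|b-b'\|\lesssim\varepsilon$ and $[1_{(0,\infty)}(b')]=0$, so $\partial$ of it vanishes. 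It therefore remains to prove the second assertion, the key step below, for $b'$. Lifting $b'$ to $x'$ near $x$ needs care: the lifts of $b$ and $b'$ differ only modulo $J$, so we use that $\|b-b'\|$ is attained by some lift difference, i.e.\ choose a lift $y$ of $b'-b$ of norm $\|b'-b\|$ and set $x'=x+y$.

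\emph{Key step: $0\notin\sigma(b)$.} Let $p=1_{(0,\infty)}(b)\in B$.
\begin{itemize}
\item[($\Leftarrow$)] Suppose $\partial[p]=0$. The unital subalgebra $C^*(b)\cong C(\sigma(b))$ has spectrum split into a positive and a negative part, so we are really dealing with the two-dimensional algebra $\C p+\C(1-p)$ up to functional calculus. Its preimage $E_0=\pi^{-1}(\C p+\C(1-p))$ is an extension of $\C^2$ (or $\C$) by $J$ with vanishing exponential map, because $\partial$ is natural and $[1]=[p]+[1-p]$ with $\partial[1]=0$. By the cited extension fact \cite{LR,Thiel}, $\rr(E_0)=0$. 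Now $x$ is close to an element of $E_0$: take $h(x)$ with $h$ equal to $\pm1$ times $|t|$ adjustments. More simply, $x$ is homotopic in invertible-modulo-$J$ self-adjoints to a lift of $2p-1$. The cleaner route is to note that $d_E(x)=0$ iff $d_E(x\,\text{-sign-normalised})=0$. For this, use the functional calculus $x\mapsto k(x)$ with $k$ continuous, equal to $\mathrm{sign}$ away from a neighbourhood of $0$ disjoint from $\sigma(b)$; then use that $E_0$ has real rank zero and interpolate.
\item[($\Rightarrow$)] If $x_n\to x$ with $x_n$ invertible self-adjoint, then for large $n$, $1_{(0,\infty)}(\pi(x_n))$ equals $p$ up to unitary equivalence close to $1$, so it has the same class as $p$. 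This projection lifts to the projection $1_{(0,\infty)}(x_n)\in E$, hence $\partial[p]=0$.
\end{itemize}

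I expect the main obstacle to be the $(\Leftarrow)$ direction: genuinely approximating $x$ itself, not merely a lift of $\pi(x)$, by invertibles. I would handle it via real rank zero of the relevant subextension, as just outlined, combined with the Lipschitz property of $d_E$.
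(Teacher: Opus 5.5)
Your case $0\in\sigma(b)$ is sound, and so is your implication $d_E(x)=0\Rightarrow\partial([p])=0$; both match the paper. In the first case the paper gets the near-zero eigenvalue more cheaply: any finite-spectrum invertible $c$ with $\norm{c-b}<\delta$ must have an eigenvalue of modulus $<\delta$, since otherwise $b$ would be invertible. So your $f_\varepsilon$ and bump-function preprocessing is unnecessary.

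The genuine gap is the implication $\partial([p])=0\Rightarrow d_E(x)=0$ when $0\notin\sigma(b)$. This is the heart of the lemma, and you flag it yourself as the main obstacle. You correctly introduce $E_0=\pi^{-1}(\C p+\C(1-p))$ and obtain $\rr(E_0)=0$. But you never carry out the transfer from $E_0$ back to $x$. The statements ``$x$ is homotopic to a lift of $2p-1$'' and ``$d_E(x)=0$ iff $d_E(k(x))=0$ \dots\ interpolate'' are assertions, not arguments. A homotopy through elements that are invertible modulo $J$ does not a priori preserve $d_E=0$; that invariance is exactly what the lemma asserts. For a general continuous $k$, for instance one vanishing at $0$, there is no direct way to carry invertible approximants of $k(x)$ back to $x$. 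This matters because $\sigma(x)$ may well contain $0$ through $J$.

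The missing idea is to relate $x$ to an element of $E_0$ by congruence with a positive invertible element of $E$, since congruence preserves invertible self-adjoints. Your functional-calculus route works with a specific choice of $k$:
\begin{itemize}
\item Fix $0<\delta\le\norm{b^{-1}}^{-1}$, and put $w=\max(|x|,\delta)^{1/2}$ and $s=w^{-1}xw^{-1}=k(x)$, where $k(t)=t/\max(|t|,\delta)$.
\item Then $w$ is positive and invertible in $E$.
\item Since $|t|\ge\delta$ on $\sigma(b)$, we get $\pi(s)=k(b)=2p-1$, so $s\in E_0$.
\item $E_0$ is unital with unit $1_E$ and has real rank zero, so there are invertible self-adjoint $z_n\in E_0$ with $z_n\to s$.
\item Hence $wz_nw$ are invertible self-adjoint elements of $E$ converging to $wsw=x$.
\end{itemize}
The paper does the same thing with $v=\exp(-\ell/2)$, where $\ell\in E_{\sa}$ lifts $\log|b|$. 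Then $\pi(vxv)=2p-1$ and $v^{-1}z_nv^{-1}\to x$. Without a congruence step of this kind, your proof of the key direction is incomplete.
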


\begin{proof}
For a projection $p\in B$, put $F_p=\pi^{-1}(C^*(1_B,p))$.
Since $C^*(1_B,p)$ is finite-dimensional, its $K_0$-group is generated
by $[p]$ and $[1_B-p]$. Moreover, $\partial([1_B])=0$, since $1_B$
lifts to $1_E$. Thus naturality of the boundary map and
\cite[Proposition 4]{LR} (see also \cite[Proposition 2.5]{Thiel}) give
\[
    \rr(F_p)=0 \iff \partial([p])=0.
\]

Suppose first that $0\notin\sigma(b)$. The function $1_{(0,\infty)}$
is continuous on $\sigma(b)$, so continuous functional calculus gives
a projection
\[
    p=1_{(0,\infty)}(b)\in C^*(1_B,b)\subseteq B.
\]
If $\partial([p])=0$, choose a self-adjoint lift $\ell\in E$ of
$\log|b|$ and set $v=\exp(-\ell/2)$. Then $v$ is positive and
invertible, and
\[
    \pi(vxv)=|b|^{-1/2}b|b|^{-1/2}=2p-1_B.
\]
Hence $vxv\in F_p$, which has real rank zero by the preceding
application of the extension theorem. Choose invertible self-adjoint
elements $z_n\in F_p$ converging to $vxv$. Then
$v^{-1}z_nv^{-1}$ are invertible self-adjoint elements of $E$
converging to $x$, so $d_E(x)=0$.

Conversely, suppose that $d_E(x)=0$. Choose an invertible
self-adjoint $y\in E$ with
\[
    \|y-x\|<\|b^{-1}\|^{-1}.
\]
The path $b_t=b+t(\pi(y)-b)$, $0\leq t\leq1$, has a uniform
spectral gap at zero. Continuous functional calculus therefore gives
a norm-continuous path of projections $1_{(0,\infty)}(b_t)$ from
$p$ to $1_{(0,\infty)}(\pi(y))$. The latter is the image of the
projection $1_{(0,\infty)}(y)\in E$. Homotopy invariance and exactness
of the six-term sequence yield $\partial([p])=0$.

Now suppose that $0\in\sigma(b)$, and fix $\delta>0$. Since $B$ has
real rank zero, choose a finite-spectrum invertible self-adjoint
element
\[
    c=\sum_{j=1}^m\lambda_jq_j,
    \qquad \|c-b\|<\delta,
\]
where the $\lambda_j$ are distinct nonzero real numbers and the
$q_j\in C^*(1_B,c)$ are nonzero orthogonal spectral projections
summing to $1_B$. Some $\lambda_i$ satisfies $|\lambda_i|<\delta$;
otherwise $\|c^{-1}\|\|c-b\|<1$ would imply that $b$ is invertible.

Put $q=q_i$ and
\[
    p=\sum_{\substack{j\ne i\\ \lambda_j>0}}q_j.
\]
By Lemma~\ref{lem:comp}, there is a projection $0<r<q$ such that
$[r]=-[p]$ in $K_0(B)$. Set
\[
    b'=\sum_{j\ne i}\lambda_jq_j+\delta r-\delta(q-r).
\]
Then $b'$ is invertible and self-adjoint, and its positive spectral
projection is $p+r$. In particular, $\partial([1_{(0,\infty)}(b')])
    =\partial([p]+[r])=0$. Moreover,
\[
    \|b'-b\|
    \leq \|c-b\|
       +\|\delta r-\delta(q-r)-\lambda_iq\|
    <3\delta.
\]
Choose a self-adjoint lift $t\in E$ of $b'-b$ with $\|t\|<4\delta$.
Since $\pi(x+t)=b'$, the invertible case gives $d_E(x+t)=0$.
The $1$-Lipschitz property of $d_E$ now yields
\[
    d_E(x)\leq d_E(x+t)+\|t\|<4\delta.
\]
As $\delta>0$ was arbitrary, $d_E(x)=0$.
\end{proof}

\begin{proposition}\label{prop:criterion}
Let $J$ be a closed ideal in a unital $C^*$-algebra $E$ with $\rr(J) = 0$ and $B = E/J$ simple purely infinite. Let $\partial : K_0(B) \to K_1(J)$ be the exponential map, and let $\pi : E \to B$ be the quotient map. For every unital $C^*$-subalgebra $A\subseteq E$ and $n\ge 1$, the inclusion $M_n(A)\subseteq M_n(E)$ has real rank zero if and only if $\partial([p]) = 0$ for every projection $p \in M_n(\pi(A))$. 
\end{proposition}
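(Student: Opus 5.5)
First reduce to $n=1$ by replacing $E$ with $M_n(E)$. The ideal $M_n(J)$ has real rank zero. The quotient $M_n(B)$ is again simple purely infinite. By naturality of the six-term sequence under the stabilisation isomorphisms, the exponential map of $0\to M_n(J)\to M_n(E)\to M_n(B)\to 0$ is identified with $\partial$. Also $M_n(A)$ is a unital subalgebra of $M_n(E)$ with $\pi^{(n)}(M_n(A))=M_n(\pi(A))$. So it suffices to show that the unital inclusion $A\subseteq E$ has real rank zero if and only if $\partial([p])=0$ for every projection $p\in\pi(A)$. By the preliminaries, the left side is equivalent to $d_E(x)=0$ for all $x\in A_{\sa}$.

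\emph{Forward direction.} Let $p\in\pi(A)$ be a projection and choose $a\in A_{\sa}$ with $\pi(a)=p$. Put $x=2a-1$, which lies in $A_{\sa}$ since $A$ is unital. Then $b=\pi(x)=2p-1$ has spectrum in $\{\pm1\}$, so $0\notin\sigma(b)$, and $1_{(0,\infty)}(b)=p$. By hypothesis $d_E(x)=0$, so Lemma~\ref{lem:gap} gives $\partial([p])=0$. The degenerate cases $p=0$ and $p=1$ cause no trouble, since $\partial([1_B])=0$ and $[0]=0$.

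\emph{Converse.} Let $x\in A_{\sa}$ and $b=\pi(x)\in\pi(A)$. If $0\in\sigma(b)$, Lemma~\ref{lem:gap} gives $d_E(x)=0$ directly. If $0\notin\sigma(b)$, then $1_{(0,\infty)}$ is continuous on $\sigma(b)$. Hence $p=1_{(0,\infty)}(b)$ lies in $C^*(1_B,b)\subseteq\pi(A)$; here we use that $\pi(A)$ is a unital $C^*$-subalgebra of $B$ (the image of a $C^*$-algebra is closed). The hypothesis gives $\partial([p])=0$, and Lemma~\ref{lem:gap} again yields $d_E(x)=0$.

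The only step needing care is the reduction to $n=1$. One must check that $M_n(B)$ is simple purely infinite, which is standard for simple purely infinite algebras. One must also check that the boundary map for the amplified extension corresponds to $\partial$ under $K_*(M_n(\cdot))\cong K_*(\cdot)$, which holds by naturality of the exponential map. Everything else is a direct application of Lemma~\ref{lem:gap}.
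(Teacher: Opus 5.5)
Your proposal is correct and follows essentially the same route as the paper. The forward direction lifts $2p-1$ to a self-adjoint element of $A$ and applies Lemma~\ref{lem:gap}; the converse applies Lemma~\ref{lem:gap} to $\pi(x)$, whose positive spectral projection lies in $\pi(A)$; and the general case comes from the amplified extension $0\to M_n(J)\to M_n(E)\to M_n(B)\to 0$, where you also spell out routine checks the paper leaves implicit (that $M_n(B)$ is simple purely infinite, $M_n(J)$ has real rank zero, and $\pi(A)$ is closed).
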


\begin{proof}
For $n=1$, suppose that $A\subseteq E$ has real rank zero and let $p\in\pi(A)$ be a projection. Choose $a\in A_{\sa}$ with $\pi(a)=2p-1$. Then $d_E(a)=0$, so Lemma~\ref{lem:gap} gives $\partial([p])=0$. Conversely, suppose $\partial$ vanishes on every projection in $\pi(A)$. For $a\in A_{\sa}$, if $\pi(a)$ is invertible, its positive spectral projection belongs to $\pi(A)$ and has zero boundary. Whether or not $\pi(a)$ is invertible, Lemma~\ref{lem:gap} gives $d_E(a)=0$, so the inclusion has real rank zero. The general case follows by applying the previous case to the extension $0\to M_n(J)\to M_n(E) \to M_n(B) \to 0$. Under the canonical identifications $K_0(M_n(B))\cong K_0(B)$ and $K_1(M_n(J))\cong K_1(J)$, the exponential map for this extension is $\partial$. 
\end{proof}

\begin{proof}[Proof of Theorem~\ref{thm:B}]
Put $I=C_0(\interior D^3)$, and regard $C(D^3)$ as a unital $C^*$-subalgebra
of $M(I)$ by restriction to the interior. Nondegeneracy gives a unital
extension $\overline\varphi: M(I)\to M(J)$ which is injective. Set $\iota=\overline\varphi|_{C(D^3)}$.

We have $\iota^{-1}(J)=I$. Indeed, let $(u_k)$ be an approximate unit
of $I$. Then $(\varphi(u_k))$ is an approximate unit of $J$. If
$\iota(f)\in J$, injectivity gives
\[
 \norm{f-fu_k}=\norm{\iota(f)-\iota(f)\varphi(u_k)}\longrightarrow0,
\]
so $f\in I$. The reverse containment is immediate. Thus $\iota$
induces a unital monomorphism $\theta: C(S^2)\to Q(J)$ and a
commutative diagram
\begin{equation}\label{eq:extensions}
\begin{tikzcd}
	0 & I & {C(D^3)} & {C(S^2)} & 0 \\
	0 & J & {M(J)} & {Q(J)} & 0
	\arrow[from=1-1, to=1-2]
	\arrow[from=1-2, to=1-3]
	\arrow["\varphi"', from=1-2, to=2-2]
	\arrow[from=1-3, to=1-4]
	\arrow["\iota"', from=1-3, to=2-3]
	\arrow[from=1-4, to=1-5]
	\arrow["\theta"', from=1-4, to=2-4]
	\arrow[from=2-1, to=2-2]
	\arrow[from=2-2, to=2-3]
	\arrow[from=2-3, to=2-4]
	\arrow[from=2-4, to=2-5]
\end{tikzcd}
\end{equation}
Since $K_1(\varphi)\ne0$, the algebra $J$ is nonelementary. Its
continuous scale therefore makes $Q(J)$ purely infinite simple.
Connectedness of $S^2$ implies that the only projections in $C(S^2)$
are $0$ and $1$, both of which lift in \eqref{eq:extensions}.
Proposition~\ref{prop:criterion} gives real rank zero of $\iota$.

Let $x_1,x_2,x_3$ be the coordinate functions on $S^2$, and consider
the Bott projection \cite[Example 6.2.3]{Rosenberg}
\[
    p=\frac12
    \begin{pmatrix}
        1+x_1 & x_2+ix_3\\
        x_2-ix_3 & 1-x_1
    \end{pmatrix}
    \in M_2(C(S^2)).
\]
The classes $[1]$ and $[p]$ form a basis of
$K_0(C(S^2))\cong\mathbb Z^2$. Write $\partial_I$ and $\partial_J$
for the exponential boundary maps associated to the two rows of
\eqref{eq:extensions}. Since $D^3$ is contractible, the restriction
map $K_0(C(D^3))\to K_0(C(S^2))$ has image $\mathbb Z[1]$, and
$K_1(C(D^3))=0$. Exactness therefore shows that $\partial_I([p])$
is a generator of $K_1(I)\cong\mathbb Z$. Since $K_1(\varphi)\ne0$,
naturality gives
\begin{equation}\label{eq:bott-boundary}
    \partial_J([\theta^{(2)}(p)])
    =K_1(\varphi)(\partial_I([p]))\ne0.
\end{equation}
Proposition~\ref{prop:criterion} now shows that $\iota^{(2)}$ does not
have real rank zero.

Finally, the codomain can be made separable. Choose a dense sequence
$(a_m)$ in $\iota(C(D^3))_{\sa}$ and finite-spectrum self-adjoints
$b_{m,k}\in M(J)$ with $\|a_m-b_{m,k}\|<1/k$. Set
\begin{equation}\label{eq:B}
    B=C^*\bigl(J,\iota(C(D^3)),b_{m,k}:m,k\geq1\bigr)
    \subseteq M(J).
\end{equation}
This algebra is separable and unital. Spectral permanence and density
give real rank zero of $\iota\colon C(D^3)\to B$. Its second
amplification cannot have real rank zero, since finite-spectrum
approximants in $M_2(B)$ would also be such approximants in
$M_2(M(J))$.
\end{proof}

\section{The factorisation obstruction}\label{sec:factor}

\begin{lemma}\label{lem:Schur}
Let $A,B,C$ be $C^*$-algebras, with $A$ and $B$ unital, and let $\alpha : A \to C$ and $\beta : C \to B$ be homomorphisms of real rank zero. If $\beta\alpha$ is unital, then $(\beta\alpha)^{(2)}$ has real rank zero.  
\end{lemma}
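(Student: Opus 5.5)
The plan is to show that every self-adjoint element of $M_2(\beta\alpha(A))$ is a norm limit of invertible self-adjoint elements of $M_2(B)$. Since $\beta\alpha$ is unital, this is equivalent to real rank zero of $(\beta\alpha)^{(2)}$, by the unital reformulation recalled in Section~\ref{sec:prelim}. The device is a Schur complement. We first make the upper-left corner invertible inside a corner of $C$ using real rank zero of $\alpha$. We then form the Schur complement, which lies in $C$ but not necessarily in $\alpha(A)$. Finally we use real rank zero of $\beta$, which applies to all of $C$, to make the Schur complement invertible in $B$.

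\emph{Setup and first step.} Put $e=\alpha(1_A)$, a projection in $C$. Since $\beta\alpha$ is unital, $\beta(e)=1_B$. Let $x=\begin{pmatrix} a & c\\ c^* & d\end{pmatrix}\in M_2(A)_{\sa}$ and fix $\varepsilon>0$. All of $\alpha(a),\alpha(c),\alpha(d)$ lie in the unital corner $eCe$. I will use real rank zero of $\alpha$ to find an element $a_0\in (eCe)_{\sa}$ that is invertible in $eCe$ and satisfies $\|a_0-\alpha(a)\|<\varepsilon$.

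Concretely, take a finite-spectrum approximant $\sum_j\lambda_jq_j$ of $\alpha(a)$, with spectral projections $q_j$ lying in $eCe$. Then replace any eigenvalue $\lambda_j$ with $|\lambda_j|<\varepsilon$ by $\pm\varepsilon$, and add $\varepsilon(e-\sum_j q_j)$ if the $q_j$ do not sum to $e$.

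\emph{Second step: the Schur complement.} Let $a_0^{-1}$ denote the inverse of $a_0$ in $eCe$, and set
\[
 s=\alpha(d)-\alpha(c)^*a_0^{-1}\alpha(c)\in C_{\sa}.
\]
Since $\beta$ has real rank zero and $\beta(e)=1_B$, the element $\beta(s)\in\beta(C)_{\sa}$ can be approximated by invertible self-adjoints of $B$. Choose such a $t$ with $\|t-\beta(s)\|<\varepsilon$. Also, $\beta(a_0)$ is invertible in $\beta(e)B\beta(e)=B$, with inverse $\beta(a_0^{-1})$. Now define
\[
 y=\begin{pmatrix}\beta(a_0) & \beta\alpha(c)\\ \beta\alpha(c)^* & t+\beta\bigl(\alpha(c)^*a_0^{-1}\alpha(c)\bigr)\end{pmatrix}.
\]
The standard congruence $y=L^*\diag(\beta(a_0),t)L$ holds with
\[
 L=\begin{pmatrix}1&\beta(a_0^{-1}\alpha(c))\\0&1\end{pmatrix},
\]
and $L$ is invertible. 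Hence $y$ is an invertible self-adjoint element of $M_2(B)$.

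\emph{Conclusion.} Compare $y$ with $(\beta\alpha)^{(2)}(x)$ entrywise. The off-diagonal entries agree exactly. The upper-left entries differ by at most $\|a_0-\alpha(a)\|<\varepsilon$. The lower-right entries differ by $\|t-\beta(s)\|<\varepsilon$. Therefore $\|y-(\beta\alpha)^{(2)}(x)\|\le 2\varepsilon$, and $d_{M_2(B)}\bigl((\beta\alpha)^{(2)}(x)\bigr)=0$.

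The order of choices is what makes this work. The approximant $t$ is chosen after $a_0$ is fixed, so the possibly large norm of $a_0^{-1}$ never enters the error estimate.

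\emph{Main obstacle.} The delicate point is the first step: obtaining the finite-spectrum approximant of $\alpha(a)$ inside the corner $eCe$, rather than merely inside $C$. I would obtain it from the hereditary formulation of real rank zero of $\alpha$, treating the positive and negative parts of $\alpha(a)$ separately. $\Her_C(\alpha(a_+))$ and $\Her_C(\alpha(a_-))$ have approximate units of projections, and both algebras sit inside $eCe$. The approximation argument of \cite[Theorems 1.6 and 1.7]{GN} then produces finite-spectrum approximants whose spectral projections lie in $\Her_C(\alpha(a_+))+\Her_C(\alpha(a_-))\subseteq eCe$. If this needs care, a fallback is to apply that argument to $\alpha(a+\lambda 1_A)$ for a suitable scalar $\lambda$, and then cut down by $e$.
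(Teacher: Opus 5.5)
Your proposal is correct and follows the paper's proof: the same Schur-complement factorisation, with the upper-left entry made invertible in $eCe$ using $\alpha$ and the Schur complement (which lies in $C$, not in $\alpha(A)$) then approximated using $\beta$, in the same order of choices. The paper removes your ``main obstacle'' in one line: since $\Her_{eCe}(\alpha(a))=\Her_C(\alpha(a))$ for $a\in A_+$, the corestriction $A\to eCe$ is a unital real rank zero map, so invertible approximants in $eCe$ come directly from the unital characterisation in the preliminaries. (Incidentally, your eigenvalue-shifting step gives $\|a_0-\alpha(a)\|<2\varepsilon$ rather than $<\varepsilon$, which is harmless.)
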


\begin{proof}
Put $e=\alpha(1_A)$. Then $\beta(e)=1_B$, and the maps
$A\to eCe\to B$ are unital and still have real rank zero. For the
first map this follows from
$\Her_{eCe}(\alpha(a))=\Her_C(\alpha(a))$ for $a\in A_+$; the second
is a restriction of $\beta$. We may therefore assume $C$, $\alpha$,
and $\beta$ are unital.

Given
\[
 X=\begin{pmatrix}a&b\\b^*&c\end{pmatrix}\in M_2(A)_{\sa}
 \quad\text{and}\quad\varepsilon>0,
\]
choose $h\in\GL_{\sa}(C)$ such that $\norm{h-\alpha(a)}<\varepsilon$.
Write $b_0=\alpha(b)$ and $u=\alpha(c)-b_0^*h^{-1}b_0$, and choose
$k\in\GL_{\sa}(B)$ with $\norm{k-\beta(u)}<\varepsilon$.
Writing $H=\beta(h)$ and $D=\beta(b_0)$, the factorisation
\[
 \begin{pmatrix}H&D\\D^*&k+D^*H^{-1}D\end{pmatrix}
 =\begin{pmatrix}1&0\\D^*H^{-1}&1\end{pmatrix}
  \begin{pmatrix}H&0\\0&k\end{pmatrix}
  \begin{pmatrix}1&H^{-1}D\\0&1\end{pmatrix}
\]
shows that the matrix on the left is invertible and self-adjoint. Its
difference from $(\beta\alpha)^{(2)}(X)$ is $\diag(\beta(h-\alpha(a)),\,k-\beta(u))$,
which has norm less than $\varepsilon$.
\end{proof}

\begin{corollary}\label{cor:factor}
Let $A$ and $B$ be unital $C^*$-algebras and let $\iota\colon A\to B$
be a unital homomorphism. If $\iota^{(2)}$ does not have real rank zero,
then $\iota$ is neither a point-norm limit of compositions of two real
rank zero homomorphisms nor approximately Murray--von Neumann
equivalent to such a composition.
\end{corollary}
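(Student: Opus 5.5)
We argue by contraposition: we show that if $\iota$ is a point-norm limit of compositions $\beta\alpha$ of real rank zero homomorphisms, or is approximately Murray--von Neumann equivalent to such a composition, then $\iota^{(2)}$ has real rank zero. Since $\iota$ is unital, real rank zero of $\iota^{(2)}$ amounts to $d_{M_2(B)}(\iota^{(2)}(X))=0$ for every $X\in M_2(A)_{\sa}$. The tool is Lemma~\ref{lem:Schur}, which requires $\beta\alpha$ to be unital. The strategy is therefore to pass from an approximate composition to an exactly unital one, apply the lemma, and transfer back using that $d$ is $1$-Lipschitz.

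\emph{Limit case.} Fix $X\in M_2(A)_{\sa}$ and $\varepsilon>0$, and choose $\alpha\colon A\to C$ and $\beta\colon C\to B$ of real rank zero with $\beta\alpha$ close to $\iota$ on $1_A$ and on the entries of $X$. Then $q=\beta\alpha(1_A)$ is a projection with $\|q-1_B\|<1$, so $q=1_B$. Thus $\beta\alpha$ is unital, and Lemma~\ref{lem:Schur} gives $d_{M_2(B)}((\beta\alpha)^{(2)}(X))=0$. Since $\|(\beta\alpha)^{(2)}(X)-\iota^{(2)}(X)\|$ is small, the Lipschitz property yields $d_{M_2(B)}(\iota^{(2)}(X))<\varepsilon$. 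As $\varepsilon$ was arbitrary, $\iota^{(2)}$ has real rank zero.

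\emph{Murray--von Neumann case.} Let $\eta=\beta\alpha$, and let $(v_\lambda)$ be contractions as in \eqref{eq:mvnequiv}. Put $q=\eta(1_A)$, a projection. From $v_\lambda q v_\lambda^*\to\iota(1_A)=1_B$, the element $w_\lambda=qv_\lambda^*$ satisfies $w_\lambda^*w_\lambda\to 1_B$, so $w_\lambda$ is asymptotically an isometry from $1_B$ into $q$. The map $\eta\colon A\to qBq$ is unital, and its corestriction still has real rank zero, because $\Her_{qBq}(\eta(a))=\Her_B(\eta(a))$ for $a\in A_+$ and $\beta$ corestricts to a map $C\to qBq$. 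Applying Lemma~\ref{lem:Schur} to $A\to C\to qBq$, we approximate $\eta^{(2)}(X)$ by an invertible $Y\in M_2(qBq)_{\sa}$. Now polar-correct $w_\lambda$ to an isometry $s_\lambda=w_\lambda(w_\lambda^*w_\lambda)^{-1/2}$ with $s_\lambda^*s_\lambda=1_B$ and $s_\lambda s_\lambda^*\le q$. Conjugating by $S=\diag(s_\lambda,s_\lambda)$ gives
\[
S^*YS\approx S^*\eta^{(2)}(X)S\approx\iota^{(2)}(X),
\]
where the last approximation comes from $v_\lambda\eta(a)v_\lambda^*\to\iota(a)$ together with $\|s_\lambda-w_\lambda\|\to0$.

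The \textbf{main obstacle} is that $S^*YS$ need not be invertible. Compressing an invertible element to the range of $ss^*$ can destroy invertibility unless $ss^*$ almost commutes with $Y$. To handle this, we first check that $e_\lambda=s_\lambda s_\lambda^*$ asymptotically commutes with $\eta(a)$. This follows from \eqref{eq:mvnequiv}: one has $\|v^*\iota(a)v-\eta(a)\|\to0$ together with $v^*v\approx$ something acting as a unit on $\eta(A)$. A cleaner route is to note $\eta(a)\approx v^*\iota(a)v\approx v^*v\,\eta(a)\,v^*v$ up to the relevant errors, so $e_\lambda\eta(a)e_\lambda\approx\eta(a)$ with a Cauchy--Schwarz bound. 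It then follows that $\eta$ is approximately $\mathrm{Ad}(s_\lambda)\circ\iota$ on all of $A$. Consequently $\iota$ is, after conjugation by an isometry, a corner of $\eta$, and in the limit $\eta$ nearly splits as $e\,\eta\,e$, which is unitarily conjugate to $\iota$.

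We then approximate $e\,\eta^{(2)}(X)\,e$ within the corner $M_2(eBe)$. Real rank zero of the compressed composition $A\to C\to eBe$ follows as above, since $\eta(1)\approx e$ on these elements; if necessary we perturb $e$ slightly to commute exactly. Lemma~\ref{lem:Schur} then applies to this unital map into $eBe\cong B$ via $s_\lambda$, giving $d_{M_2(B)}(\iota^{(2)}(X))$ arbitrarily small. If the near-commutation estimate proves delicate, the fallback is to replace $\eta$ by $\mathrm{Ad}(s_\lambda)\circ\eta$ composed with the hereditary corner, which is itself a composition of two real rank zero maps. This reduces the Murray--von Neumann case to the limit case already treated.
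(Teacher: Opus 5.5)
Your limit case is correct and essentially the paper's argument; you use the $1$-Lipschitz property of $d$ where the paper cites an approximation lemma of Gabe--Neagu.

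\textbf{Gap in the Murray--von Neumann case.} As written, this half is not proved. You use only one of the two limits that \eqref{eq:mvnequiv} gives at $a=1_A$. Besides $v_\lambda q v_\lambda^*\to 1_B$, you also have $v_\lambda^*v_\lambda=v_\lambda^*\iota(1_A)v_\lambda\to\eta(1_A)=q$.

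This gives $w_\lambda w_\lambda^*=qv_\lambda^*v_\lambda q\to q$, and hence $s_\lambda s_\lambda^*\to q$. Since $s_\lambda s_\lambda^*\le q$ are projections at distance $<1$, you get $s_\lambda s_\lambda^*=q$ exactly for large $\lambda$.

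With this equality your ``main obstacle'' disappears. $SS^*=\diag(q,q)$ is the unit of $M_2(qBq)$, so $S^*YS$ is invertible with inverse $S^*Y^{-1}S$. Equivalently, $a\mapsto s_\lambda^*\eta(a)s_\lambda$ is a unital homomorphism. It factors through real rank zero maps $A\to eCe\to qBq\cong B$ and converges pointwise to $\iota$, which reduces to the limit case. That is exactly the paper's proof; your $s_\lambda$ is the adjoint of the paper's $w_\lambda$.

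Without $s_\lambda s_\lambda^*=q$, the remedies you sketch do not work. If $e=s_\lambda s_\lambda^*<q$, then neither $a\mapsto e\eta(a)e$ nor $a\mapsto s_\lambda^*\eta(a)s_\lambda$ is multiplicative. So there is no ``compressed composition'' of real rank zero homomorphisms $A\to C\to eBe$ to which Lemma~\ref{lem:Schur} could apply. The step ``perturb $e$ slightly to commute exactly'' is unjustified and would still not produce a homomorphism. Your fallback is the right idea, but it is valid only once $ss^*=q$ has been established.

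\textbf{Smaller error.} $\beta$ need not corestrict to a map $C\to qBq$, because $\beta(C)$ need not lie in $qBq$ when $q=\beta(\alpha(1_A))\ne\beta(1_C)$ (or when $C$ is nonunital). As in the paper and in the proof of Lemma~\ref{lem:Schur}, first cut down to $eCe$ with $e=\alpha(1_A)$. Then $\beta(eCe)\subseteq qBq$, and both $A\to eCe$ and $eCe\to qBq$ are unital and of real rank zero.
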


\begin{proof}
Suppose $\eta_\lambda=\beta_\lambda\alpha_\lambda\to\iota$ pointwise,
where $\alpha_\lambda\colon A\to C_\lambda$ and
$\beta_\lambda\colon C_\lambda\to B$ have real rank zero. The projections
$\eta_\lambda(1_A)$ converge to $1_B$, so they are eventually equal to
$1_B$. Lemma~\ref{lem:Schur} implies real rank zero of
$\eta_\lambda^{(2)}$ for large $\lambda$. These maps converge pointwise
to $\iota^{(2)}$, which therefore has real rank zero by
\cite[Lemma 1.10]{GN}, a contradiction.

For the second assertion, let $\alpha\colon A\to C$ and
$\beta\colon C\to B$ have real rank zero, and suppose
$\eta=\beta\alpha$ is approximately Murray--von Neumann equivalent to
$\iota$, implemented by $(v_\lambda)$ as in \eqref{eq:mvnequiv}.
Put $q=\eta(1_A)$ and $e=\alpha(1_A)$. Evaluating at the unit gives
$v_\lambda^*v_\lambda\to q$ and $v_\lambda qv_\lambda^*\to1_B$ in
norm. Let $x_\lambda=v_\lambda q$. For sufficiently large $\lambda$,
$x_\lambda^*x_\lambda$ is invertible in $qBq$ and
$x_\lambda x_\lambda^*$ is invertible in $B$. Thus
$w_\lambda=x_\lambda(x_\lambda^*x_\lambda)^{-1/2}$ satisfies
$w_\lambda^*w_\lambda=q$, $w_\lambda w_\lambda^*=1_B$, and
$\norm{w_\lambda-x_\lambda}\to0$. Since $q\eta(a)q=\eta(a)$, the
unital homomorphisms
\[
    \rho_\lambda\colon A\to B,\qquad
    \rho_\lambda(a)=w_\lambda\eta(a)w_\lambda^*,
\]
converge pointwise to $\iota$. Each $\rho_\lambda$ factors as
$\beta_\lambda\alpha_e$, where
\[
    \alpha_e\colon A\to eCe,\quad \alpha_e(a)=\alpha(a),
    \qquad
    \beta_\lambda\colon eCe\to B,\quad
    \beta_\lambda(c)=w_\lambda\beta(c)w_\lambda^*.
\]
The map $\alpha_e$ is the corestriction of $\alpha$ and has real rank
zero as in Lemma~\ref{lem:Schur}. The map $\beta_\lambda$ also has
real rank zero: restrict $\beta$ to $eCe\to qBq$ and compose with
the isomorphism $qBq\to B$ implemented by $w_\lambda$. Both maps are
unital, so this contradicts the first assertion.
\end{proof}

\section{Examples}\label{sec:examples}

Throughout this section, put $I=C_0(\interior D^3)\cong C_0(\R^3)$. The following lemma
ensures nondegeneracy in both constructions.

\begin{lemma}\label{lem:hereditary}
Let $A$ be a separable simple $C^*$-algebra of real rank zero, let
$\gamma\colon I\to A$ be a monomorphism, and let $h\in I_+$ be
strictly positive. Then $J=\Her_A(\gamma(h))$ is nonunital,
separable, simple, and of real rank zero. Corestricting $\gamma$
gives a nondegenerate monomorphism $I\to J$.
\end{lemma}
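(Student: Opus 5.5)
Most of the listed properties of $J=\Her_A(\gamma(h))$ come from standard permanence results. Separability: $J\subseteq A$ is a subspace of a separable metric space. Real rank zero passes to hereditary subalgebras \cite{BP}. Simplicity: every closed ideal of a hereditary subalgebra $\overline{aAa}$ has the form $K\cap \overline{aAa}$ for a closed ideal $K$ of $A$, and $A$ is simple. Finally, $J\neq 0$ because $\gamma$ is injective and $h\ne0$.

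Next, $\gamma(I)\subseteq J$. Since $h$ is strictly positive in $I$, the algebra $\overline{hIh}$ equals $I$. Hence for every $f\in I$ we can approximate $f$ by $hgh$ with $g\in I$. Then $\gamma(hgh)=\gamma(h)\gamma(g)\gamma(h)$ lies in $\gamma(h)A\gamma(h)$, so $\gamma(f)\in J$ by closedness. So the corestriction $\gamma\colon I\to J$ is a well-defined monomorphism.

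Nondegeneracy comes next. The element $\gamma(h)$ is strictly positive in $J=\overline{\gamma(h)A\gamma(h)}$. Indeed, $\gamma(h)^{1/n}$ is an approximate unit for $J$: it acts as one on each $\gamma(h)a\gamma(h)$, and these elements are dense. Now take an approximate unit $(u_k)$ of $I$. Then $\gamma(u_k)\gamma(h)\to\gamma(h)$, and hence $\gamma(u_k)\gamma(h)a\gamma(h)\to\gamma(h)a\gamma(h)$. By density and uniform boundedness, $\gamma(u_k)x\to x$ for all $x\in J$. So $\overline{\gamma(I)J}=J$.

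The main obstacle is nonunitality, and I would argue it by contradiction. If $J$ were unital, its unit $1_J$ would be the limit of the approximate unit $\gamma(u_k)$, in norm, since $\gamma(u_k)1_J\to1_J$ and $\gamma(u_k)\in J$. Because $\gamma(I)$ is closed, this gives $1_J\in\gamma(I)$, say $1_J=\gamma(e)$. Injectivity of $\gamma$ makes $e$ a projection in $I$ that acts as a unit on $I$. But $I=C_0(\R^3)$ is nonunital, which is a contradiction.

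An alternative route to the same conclusion is that $\gamma(h)$ is strictly positive in $J$ and $0$ is a non-isolated point of $\sigma(h)=\sigma(\gamma(h))\cup\{0\}$. The first argument is cleaner, so I would use it.
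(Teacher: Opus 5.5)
Your proof is correct and follows essentially the paper's route: you show that $\gamma$ carries an approximate unit of $I$ to an approximate unit of $J$, which gives nondegeneracy. You then rule out a unit for $J$ because that approximate unit would converge to it in norm and pull back, by injectivity and closed range, to a unit of $C_0(\R^3)$. The only difference is cosmetic: the paper takes $u_n=f_n(h)$ with $f_n(t)=t/(t+1/n)$, so $\gamma(u_n)=f_n(\gamma(h))$ is automatically an approximate unit for $J=\Her_A(\gamma(h))$ and $\gamma(u_nfu_n)\in J$ gives $\gamma(I)\subseteq J$ at once, whereas you obtain these via $\overline{hIh}=I$ and a density argument.
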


\begin{proof}
Set $f_n(t)=t/(t+1/n)$ and $u_n=f_n(h)$. Then $(u_n)$ is an
approximate unit for $I$, and $(\gamma(u_n))=(f_n(\gamma(h)))$ is an
approximate unit for $J$. For $f\in I$, the elements
$\gamma(u_nfu_n)$ belong to $J$ and converge to $\gamma(f)$.
Thus $\gamma(I)\subseteq J$, and the corestriction is nondegenerate.
If $J$ were unital, $(\gamma(u_n))$ would converge in norm to its
unit. Since $\gamma$ is isometric, $(u_n)$ would then converge in
norm to a unit for $I$, a contradiction.

As a nonzero hereditary subalgebra of $A$, the algebra $J$ is
separable, simple, and of real rank zero \cite{BP}.
\end{proof}

\subsection{Purely infinite ideals}
A Kirchberg algebra is a separable nuclear purely infinite simple
$C^*$-algebra. For a stable Kirchberg algebra $J$, real rank zero
and continuous scale are automatic. The remaining hypothesis of
Theorem~\ref{thm:B} is therefore an embedding of $I$ which detects a
nonzero class in $K_1(J)$. We obtain such an embedding by first
specifying its $KK$-class and then applying an existence theorem
for homomorphisms.

\begin{proposition}\label{prop:kirchberg}
Let $J$ be a stable Kirchberg algebra with $K_1(J)\ne0$.
Then $J$ has real rank zero and continuous scale, and there is a
nondegenerate monomorphism $\varphi\colon I\to J$ with
$K_1(\varphi)\ne0$.
\end{proposition}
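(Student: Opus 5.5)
Real rank zero of $J$ follows from Zhang's theorem, since $J$ is simple and purely infinite. For continuous scale, note that $J$ is stable and Kirchberg, so $J\cong J\otimes\K$. Moreover $Q(J)$ is simple and purely infinite: for a stable purely infinite simple $J$ this is a classical result (Lin, or Rørdam for the case $\mathcal O_\infty$-absorbing). By the equivalence recalled in the preliminaries, \cite{Lin91,Lin04} and \cite[Theorem 1.2]{Ng}, this gives continuous scale, because $J$ is nonelementary. Alternatively, one can verify continuous scale directly. Take an approximate unit of projections $e_n=1\otimes(\text{rank } n \text{ projection})$. Every nonzero projection in $J$ is properly infinite, and every nonzero positive $b$ dominates a nonzero projection $q$ in its hereditary subalgebra. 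Each $e_m-e_n$ then lies in a full corner and is Murray--von Neumann subequivalent to $q$, by purely infinite comparison (Cuntz).

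For the embedding, I would first pick the $KK$-class. We have $KK(I,J)=KK(C_0(\R^3),J)\cong KK(C_0(\R),J)\cong K_1(J)$ by Bott periodicity. Under this isomorphism, the induced map on $K_1(I)\cong\Z$ sends the generator to the chosen class in $K_1(J)$. So choose $x\in KK(I,J)$ corresponding to a nonzero $g\in K_1(J)$. Then $K_1(x)$ sends the generator of $K_1(I)$ to $g\ne0$. Next, realise $x$ by a homomorphism. Kirchberg's existence theorem (Kirchberg--Phillips, as in Gabe's treatment \cite{GabeO2}) states the following: for $A$ separable exact, and in fact nuclear here since $I$ is commutative, and $J$ stable Kirchberg, every element of $KK(A,J)$ is represented by a full, hence injective, homomorphism $A\to J$ when $A$ is simple. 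For the nonsimple $I$, I would instead invoke the version for $\mathcal O_\infty$-stable targets. It states that every $KK$-class in $KK(I,J)$ with $J$ stable, $\mathcal O_\infty$-stable and simple is induced by a full $*$-homomorphism $\psi\colon I\to J$. Fullness into a simple algebra only means nonzero on nonzero elements, that is, injective.

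Finally, I need nondegeneracy. Apply Lemma~\ref{lem:hereditary} with $A=J$ and $\gamma=\psi$. This gives a nondegenerate monomorphism $\varphi\colon I\to J'=\Her_J(\psi(h))$. The algebra $J'$ is a full hereditary subalgebra of the simple algebra $J$, and it is nonunital, separable and simple. It is also Kirchberg, because nuclearity and pure infiniteness pass to hereditary subalgebras. By Brown's theorem, $J'\otimes\K\cong J\otimes\K\cong J$, and the inclusion $J'\hookrightarrow J$ is a $K$-theory isomorphism. Hence $K_1(\varphi)\ne0$, since composing with the inclusion gives $K_1(\psi)\ne0$.

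The statement, however, wants $\varphi$ to land in $J$ itself. So I must show that $J'$ is stable, and then transport via an isomorphism $J'\cong J$ that preserves $K_1$-nontriviality. Stability of $J'$ is where I expect the main difficulty. I would argue it as follows. $J'$ is nonunital, $\sigma$-unital and purely infinite simple, and Zhang's dichotomy says that a $\sigma$-unital purely infinite simple algebra is either unital or stable. Therefore $J'\cong J'\otimes\K\cong J$. Composing $\varphi$ with this isomorphism keeps the map nondegenerate and injective, and $K_1$ of the composite remains nonzero. The remaining risk is citing the correct form of the existence theorem for nonsimple domains; the fallback is to require only injectivity plus the $KK$-class, which the $\mathcal O_\infty$-stable existence theorem provides.
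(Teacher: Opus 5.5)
Your proposal is correct and follows essentially the paper's route: Zhang's theorem for real rank zero, a direct purely infinite comparison argument for continuous scale, a class in $KK(I,J)\cong K_1(J)$ detecting a chosen $g\neq0$ via the Bott equivalence, Gabe's $\mathcal O_\infty$-stable existence theorem for a full (hence injective) map, and then Lemma~\ref{lem:hereditary}, Zhang's dichotomy and Brown's theorem to return to $J$ itself. The only loose point is your choice $e_n=1\otimes(\text{rank-}n\text{ projection})$, which tacitly uses $J\cong pJp\otimes\K$ for a nonzero projection $p$; this is true by Brown's theorem, but any increasing approximate unit of projections works, as in the paper.
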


\begin{proof}
The algebra $J$ has real rank zero by \cite{Zhang}. Choose an
increasing approximate unit of projections $(e_n)$ for $J$.
Pure infiniteness and simplicity give $e_m-e_n\precsim b$ whenever
$m>n$ and $0\ne b\in J_+$, so $J$ has continuous scale.

Let $b\in K_1(I)=KK^1(\C,I)$ be the odd Bott class. Since
$I\cong C_0(\R^3)$, this class is a graded $KK$-equivalence:
there is $d\in KK^1(I,\C)$ with
$b\otimes_I d=1_{\C}$ and $d\otimes_{\C}b=1_I$
\cite[Theorem 19.2.1 and Section 19.2.5]{Blackadar}.
Choose $0\ne g\in K_1(J)=KK^1(\C,J)$ and set
\[
    \kappa=d\otimes_{\C}g\in KK(I,J).
\]
The map on $K_1$ induced by $\kappa$ sends the generator $b$ to
\[
    b\otimes_I\kappa
    =(b\otimes_I d)\otimes_{\C}g=g.
\]
Since $I$ is separable and nuclear, and $J$ is $\sigma$-unital and
contains a full properly infinite projection, Gabe's existence
theorem \cite[Theorem A]{GabeOinf} yields a full homomorphism
$\gamma\colon I\to J$ with $KK(\gamma)=\kappa$, where we use nuclearity of $I$ to identify $KK_{\mathrm{nuc}}(I,J)=KK(I,J)$. Fullness here means that the image of every nonzero positive element generates $J$ as a closed ideal; in particular, $\gamma$ is injective. Moreover, $K_1(\gamma)(b)=g$.

It remains to arrange nondegeneracy. Choose strictly positive
$h\in I_+$ and put $J_0=\Her_J(\gamma(h))$. By
Lemma~\ref{lem:hereditary}, the corestriction
$\gamma_0\colon I\to J_0$ is nondegenerate and $J_0$ is nonunital.
Writing $j\colon J_0\hookrightarrow J$ for the inclusion, we have
$K_1(j)K_1(\gamma_0)=K_1(\gamma)\ne0$.
The algebra $J_0$ is also separable and purely infinite simple, so
Zhang's dichotomy \cite[Theorem 1.2]{Zhang} makes it stable.
Since $J_0$ is full in $J$, Brown's stable isomorphism theorem
\cite[Theorem 2.8]{Brown} gives $J_0\otimes\K\cong J\otimes\K$.
Stability therefore gives an isomorphism $\psi\colon J_0\to J$,
and $\varphi=\psi\gamma_0$ has the required properties.
\end{proof}

\begin{example}
For a concrete choice, let
\[
 R=\begin{pmatrix}1&1&0\\1&1&1\\0&1&1\end{pmatrix}.
\]
This is an irreducible nonpermutation matrix, so the Cuntz--Krieger
algebra $\mathcal O_R$ is a unital Kirchberg algebra \cite{CK}.
The calculation in \cite{Cuntz} gives
\[
 K_1(\mathcal O_R)=\ker(1-R^{\mathsf t})=\Z(1,0,-1).
\]
Theorem~\ref{thm:B} applies with $J=\mathcal O_R\otimes\K$, by Proposition~\ref{prop:kirchberg}.
\end{example}

\subsection{Stably finite codomains}

Goodearl's construction \cite{Goodearl} provides an alternative in
which the multiplier algebra is stably finite. Its connecting maps
combine an identity summand with point evaluations. The identity
summand preserves the class in $K_1$, even as its proportion of the
matrix size tends to zero. Dense point evaluations ensure simplicity,
while the vanishing proportion of the identity summand gives real
rank zero.

\begin{proposition}\label{prop:Goodearl}
There exist a nonunital separable simple $C^*$-algebra $J$ of real
rank zero and continuous scale and a nondegenerate monomorphism
$\varphi\colon I\to J$ with $K_1(\varphi)\ne0$ such that $M(J)$
is stably finite.
\end{proposition}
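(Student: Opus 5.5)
We will realise $J$ as a hereditary subalgebra of a Goodearl-type inductive limit and then obtain nondegeneracy from Lemma~\ref{lem:hereditary}. Fix a dense sequence $(x_k)$ in $D^3$, or in $S^3$ after embedding $D^3\subseteq S^3$ so that $I\subseteq C(S^3)$ as functions vanishing off the open ball. Define a unital inductive system $A_k=M_{n_k}(C(D^3))$. The connecting map $A_k\to A_{k+1}$, with $n_{k+1}=n_k(1+m_k)$, is
\[
f\mapsto \diag\bigl(f,\,f(x_1),\dots,f(x_{m_k})\bigr),
\]
where each point evaluation is viewed as a scalar matrix, and the evaluation points are taken from the dense sequence so that every $x_j$ eventually appears. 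We choose the $m_k$ so that $\prod_k n_k/n_{k+1}=\prod_k(1+m_k)^{-1}=0$, i.e.\ the identity summand's proportion tends to zero. Put $A=\varinjlim A_k$.

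First, $A$ is simple. This is the standard Goodearl/Dădărlat–Nagy–Nemethi–Pasnicu argument: the dense point evaluations show that any nonzero positive element of $A_k$ is eventually full in some $A_{l}$.

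Second, $A$ has real rank zero. Goodearl's criterion applies because the identity summand's proportion tends to zero. Concretely, a self-adjoint $f\in A_k$ is pushed far enough that it becomes, up to a unitary conjugation, an arbitrarily small proportion of $f$ plus a large constant block containing all the values of $f$. One then perturbs, following \cite{Goodearl}, to a finite-spectrum element. The standard condition in Goodearl's theorem is exactly that the dimension ratio of the non-constant part goes to zero. I expect this to be the main obstacle: it requires importing Goodearl's theorem precisely, with $D^3$ in place of his spaces.

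Third, $K_1$ survives. Each connecting map acts on $K^*$ as the identity on the identity summand plus maps factoring through a point, which kill $K^1$. The composite $\gamma\colon I\subseteq C(D^3)\to A_1\to A$, however, gives a map $K_1(I)\to K_1(A)$ that we need to be nonzero. Since $K_1(C(D^3))=0$, we instead use the unitization $\tilde I\cong C(S^3)$: let $A_k=M_{n_k}(C(S^3))$, with $I$ included as functions vanishing at the base point. Now $K_1(C(S^3))=\Z$, and it is preserved by the connecting maps, since point evaluations contribute zero and the identity summand is injective. Hence $K_1(A)\cong\Z$, and $K_1(\gamma)$ is injective because $K_1(I)\to K_1(C(S^3))$ is an isomorphism. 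The map $\gamma$ is injective since $A_1\to A$ is injective, the identity summand being isometric. Then set $J=\Her_A(\gamma(h))$. By Lemma~\ref{lem:hereditary}, $J$ is nonunital, separable, simple and of real rank zero, and $\varphi\colon I\to J$ is nondegenerate. Moreover $K_1(J)\to K_1(A)$ is an isomorphism by Brown's theorem, since $J$ is full, so $K_1(\varphi)\ne0$.

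Finally we need continuous scale of $J$ and stable finiteness of $M(J)$. Since $A$ is unital, simple, stably finite (AH with $K_0$ ordered by the rank) and of real rank zero, we should choose $h$, or rather replace $J$ by a suitable hereditary subalgebra, so that the scale is continuous. One way is to use Lin's result that a simple real rank zero algebra whose $K_0$ has a unique state and satisfies strict comparison admits hereditary subalgebras with continuous scale. The trace space of $A$ is governed by the point evaluations plus the vanishing identity part; if it is a single point, then $\tau(\gamma(h))$ finite and continuity of $\tau\circ\gamma$ give $e_m-e_n$ of small trace. Comparison in real rank zero AH algebras of slow dimension growth then gives $e_m-e_n\precsim b$. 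In general we would pick $h$ with $\tau(\gamma(h))$ small uniformly over the traces. Stable finiteness of $M(J)$ then follows because a bounded densely finite trace on $J$ extends to a faithful finite trace on $M(J)\otimes M_n$ modulo $J$, as in the continuous scale literature \cite{Lin91,Lin04}. This verification, alongside real rank zero, is the delicate part of the argument.
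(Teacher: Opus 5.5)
Your construction and first three steps follow the paper's route: Goodearl's system over $S^3$ with an identity summand of vanishing proportion, simplicity and real rank zero from Goodearl, $J=\Her_A(\gamma(h))$ via Lemma~\ref{lem:hereditary}, and $K_1$ carried by the identity summand. The final paragraph, however, is where the substance of this proposition lies, and it has two genuine gaps.

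\textbf{Continuous scale.} Your argument is left conditional on $A$ having a single tracial state. The fallback of choosing $h$ with $\tau(\gamma(h))$ uniformly small does not give $e_m-e_n\precsim b$ for every $b$: with several traces you would need continuity of the scale function, and smallness does not provide it. Replacing $J$ by a smaller hereditary subalgebra would lose $\varphi(I)\subseteq J$ and nondegeneracy. The missing step is short:
\begin{itemize}
\item Since $S^3$ is connected, every projection in $A_k$ has constant rank, so all tracial states of $A$ agree on projections coming from finite stages.
\item Every projection of $A$ is equivalent to one of these.
\item Real rank zero means a trace is determined by its values on projections.
\end{itemize}
Hence $A$ has a unique tracial state $\tau$, faithful by simplicity. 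Now take an increasing approximate unit of projections $(e_n)$ for $J$ and put $s=\norm{\tau|_J}$. Monotonicity gives $\tau(e_m-e_n)\le s-\tau(e_n)\to0$; no continuity of $\tau\circ\gamma$ is involved. Comparison of projections by $\tau$ (Goodearl's comparison result, or slow dimension growth) then gives $e_m-e_n\precsim q\precsim b$ for a nonzero projection $q\in\Her_J(b)$. Any strictly positive $h$ works, so no special choice is needed.

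\textbf{Stable finiteness of $M(J)$.} A trace ``modulo $J$'' cannot do this job. $J$ is nonelementary with continuous scale, so $Q(J)$ is purely infinite simple and carries no nonzero bounded trace. In any case, a trace vanishing on $J$ is not faithful and cannot exclude proper isometries. What is needed is the following:
\begin{itemize}
\item The faithful bounded trace $\tau|_J$ extends to a bounded trace $\overline\tau$ on $M(J)$.
\item This extension is faithful because $J$ is essential. If $x\in M(J)_+$ with $\overline\tau(x)=0$, then for all $a\in J$ we have $\tau(a^*xa)=\overline\tau(x^{1/2}aa^*x^{1/2})\le\norm{a}^2\overline\tau(x)=0$. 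So $x^{1/2}J=0$, and therefore $x=0$.
\end{itemize}
Then $\overline\tau\otimes\mathrm{Tr}_n$ is a faithful trace on $M_n(M(J))$, so $v^*v=1$ forces $vv^*=1$. This is exactly stable finiteness.
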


\begin{proof}
Choose $\infty\in S^3$ and a sequence $(z_n)\subseteq S^3\setminus
\{\infty\}$ such that every tail is dense in $S^3$. Define
\[
 A_n=M_{2^{n-1}}(C(S^3)),\qquad
 \alpha_n(f)(z)=\diag(f(z),f(z_n)),
\]
and let $A=\varinjlim(A_n,\alpha_n)$, with first-stage map
$\eta\colon C(S^3)\to A$. The connecting maps are injective.
The density assumption gives simplicity by \cite[Lemma 1]{Goodearl}.
The proportion of the matrix size occupied by the identity summand
from stage $n$ to stage $m$ is $2^{n-m}$, which tends to zero as
$m\to\infty$. Thus $A$ has real rank zero by
\cite[Theorem 9]{Goodearl}.

We shall use the trace and projection comparison in $A$. Tracial
states exist since $A$ is a unital inductive limit of homogeneous
algebras. As $S^3$ is connected, each projection in $A_n$ has constant
rank, so all tracial states of $A$ agree on projections from every
stage. Every projection in $A$ is equivalent to one from a stage;
hence real rank zero implies that $A$ has a unique tracial state
$\tau$. It is faithful by simplicity. The comparison result in
\cite[p.~648, following Theorem 10]{Goodearl} gives, for all projections $p,q\in A$,
\begin{equation}\label{eq:trace-comparison}
    \tau(p)<\tau(q)\quad\Longrightarrow\quad p\precsim q.
\end{equation}

Identify $I$ with $C_0(S^3\setminus\{\infty\})$, choose strictly
positive $h\in I_+$, and put
\[
    J=\Her_A(\eta(h)),\qquad
    \varphi=\eta|_I\colon I\to J.
\]
Lemma~\ref{lem:hereditary} shows that $J$ is nonunital, separable,
simple, and of real rank zero, and that $\varphi$ is nondegenerate.
To check its $K_1$-map, note that point evaluation factors through a
matrix algebra over $\C$ and therefore induces the zero map on $K_1$.
Consequently, each $K_1(\alpha_n)$ is the identity under the standard
matrix identifications, and $K_1(\eta)$ is an isomorphism.
The inclusion $I\hookrightarrow C(S^3)$ also induces an isomorphism
on $K_1$, by the split extension associated to evaluation at
$\infty$. Since $K_1(I)\cong\Z$, the composite
\[
    K_1(I)\xrightarrow{K_1(\varphi)}K_1(J)
    \longrightarrow K_1(A)
\]
is nonzero, so $K_1(\varphi)\ne0$.

For continuous scale, choose an increasing approximate unit of
projections $(e_n)$ for $J$, and put $s=\norm{\tau|_J}$.
Given $0\ne b\in J_+$, real rank zero provides a nonzero projection
$q\in\Her_J(b)$. Since $\tau(e_n)\to s$ and $\tau(q)>0$, there is
$N$ such that, for all $m>n\geq N$,
\[
    \tau(e_m-e_n)\leq s-\tau(e_n)<\tau(q).
\]
Equation~\eqref{eq:trace-comparison} gives $e_m-e_n\precsim q$.
Both projections belong to the hereditary subalgebra $J$, so the
comparison takes place in $J$. As $q\precsim b$, this proves
continuous scale.

Finally, the faithful bounded trace $\tau|_J$ extends to a bounded
trace $\overline\tau$ on $M(J)$. To see that the extension is
faithful, suppose $x\in M(J)_+$ and $\overline\tau(x)=0$.
For $a\in J$, traciality gives
\[
    0\leq\tau(a^*xa)
    =\overline\tau(x^{1/2}aa^*x^{1/2})
    \leq\norm{a}^2\overline\tau(x)=0.
\]
Faithfulness on $J$ implies $x^{1/2}a=0$ for all $a\in J$, and
essentiality of $J$ gives $x=0$. Thus $M(J)$ has a faithful tracial
state and is stably finite.
\end{proof}

\begin{proof}[Proof of Theorem~\ref{thm:A}]
Take $J$ and $\varphi$ to be any of the examples constructed in Section~\ref{sec:examples}.
Theorem~\ref{thm:B} gives a separable unital subalgebra
$B\subseteq M(J)$ and a unital inclusion $\iota\colon C(D^3)\to B$
of real rank zero whose second amplification does not have real
rank zero. 
\end{proof}

Our examples do not address the following.

\begin{question*}
	If the inclusion $M_n(A)\subseteq M_n(B)$ has real rank zero for all $n \in \mathbb N$, does the inclusion $A\subseteq B$ approximately factor through real rank zero $C^*$-algebras?
\end{question*}

\end{document}